\definecolor{vert}{rgb}{0,0.6,0}
\pgfplotsset{compat=1.17}
\numberwithin{figure}{section}
\theoremstyle{plain}
\newtheorem{defn}{Definition}[section]
\newtheorem{thm}[defn]{Theorem}
\newtheorem{ex}[defn]{Example}
\newtheorem{cor}[defn]{Corollary}
\newtheorem*{rem}{\textbf{Remark}}
\numberwithin{equation}{section}
\newcommand{\R}{\mathbb{R}}
\newcommand{\cB}{\mathcal{B}}
\newcommand{\BUC}{{\rm BUC\,}}
\newcommand{\RNum}[1]{\uppercase\expandafter{\romannumeral #1\relax}}
\begin{document}

\title
{The vanishing viscosity process for an eikonal equation in the radially symmetric setting}

\pagestyle{fancy}
\fancyhf{}  
\fancyhead[CE]{\footnotesize F. Meng}                          
\fancyhead[CO]{\footnotesize Vanishing Viscosity for Eikonal Equation}
\fancyhead[LE]{\footnotesize \thepage}  
\fancyhead[RO]{\footnotesize \thepage}  
\renewcommand{\headrulewidth}{0pt}

\author
{Fanchen Meng}

\thanks{
}

\address
{
Department of Mathematics, 
Cornell University,  212 Garden Ave, Ithaca, NY 14853, USA}
\email{fm484@cornell.edu}

\keywords{Vanishing viscosity method; Viscosity solutions; Convergence rate; Asymptotic behavior; First-order Hamilton-Jacobi equations; Radially symmetric setting.}
\subjclass[2010]{
35B40, 
49L25 
}

\maketitle
\begin{abstract}
    We study the vanishing viscosity method for the eikonal equation $|Du|=V$ in $\cB(0,1)$ with homogeneous Dirichlet boundary value condition. By assuming $V$ is radially symmetric and restricting attention to radially symmetric solutions, we construct explicit formulas for both the viscous solution $u^{\epsilon}$ and the limiting solution $u$. We prove $u^{\epsilon}\rightarrow u$ as $\epsilon \rightarrow 0^+$ qualitatively and quantitatively derive an $\epsilon |\log \epsilon|$ type local convergence rate. Finally, we discuss the uniqueness of viscosity solutions for the eikonal equation and give some examples. 
\end{abstract}
\section{Introduction}
In this paper, we consider the following eikonal equation with homogeneous Dirichlet boundary value condition
\begin{equation}\label{eikonal}
\left\{
\begin{aligned}
|Du| &= V \quad \text{in } \cB(0,1), \\
u &= 0 \quad \text{on } \partial \cB(0,1),
\end{aligned} 
\right.
\end{equation}
where $\cB(0,1) $ is the unit ball in $\mathbb{R}^n$, and $V: \cB(0,1) \rightarrow [0,\infty) $ is a nonnegative Lipschitz continuous function.

The eikonal equation arises as a fundamental model in the theory of first-order Hamilton-Jacobi equations. It represents the stationary Hamilton-Jacobi equation with a Hamiltonian of the form $H(p,x)=|p|-V(x)$ for $(p,x)\in \R^n\times \R^n$, and plays a central role in both the analysis of nonlinear PDEs and in applications such as geometric optics, front evolution, and optimal control. 

To study the solutions of \eqref{eikonal}, we use the vanishing viscosity procedure. For $\epsilon>0$, we consider the equation
\begin{equation}\label{vanishingviscosity}
\left\{
\begin{aligned}
    |Du^{\epsilon}|-V &= \epsilon \Delta u^\epsilon \quad \text{in } \cB(0,1), \\
    u^\epsilon &=0 \quad \text{on } \partial \cB(0,1).   
\end{aligned}
\right.
\end{equation}
To simplify our problem, we always assume that $V$ is radially symmetric and consider the radially symmetric solution of \eqref{eikonal}.
With a slight abuse of notation, we suppose $V(x)=V(|x|)=V(r)$, where $V: [0,1) \rightarrow [0,\infty)$ is Lipschitz continuous. 

The vanishing viscosity method is a classical regularization technique for studying first-order Hamilton-Jacobi equations (see \cite{CrandallLions1983, CrandallEvansLions1984, Tran2021} and the references therein). By adding a small diffusive term $\epsilon \Delta u^{\epsilon}$ to the original equation, the new equation admits smooth solutions for each $\epsilon>0$. Classically, this process selects a correct viscosity solution out of many possible almost everywhere solutions. If the limiting equation has the comparison principle, then it has a unique viscosity solution, and hence, the unique viscosity solution is selected. 

     It is important to emphasize that when studying the convergence of solutions to \eqref{vanishingviscosity} as $\epsilon \to 0$, a so-called  ``selection problem''arises. Specifically, for our problem, the limiting PDE \eqref{eikonal} admits multiple viscosity solutions as $V$ can be zero at some locations, which is discussed in detail in Section 5. It is not clear at all what solution that $\{u^{\epsilon} \}_{\epsilon>0}$ converges to, and what is the underlying selection principle. In general, for such a selection problem, there has been a lot of related research (see \cite{Bessi, Gomes2003, Evans, AIPS, Yu, Gomes2008, Cagnetti2011, ishii2017vanishing, ishii2017boundary, TuZhang} and the references therein), but this remains a widely open field. For further discussions and a broader view of related open problems, we refer the reader to \cite[Section 6.6]{LeMitakeTran2017}, where these issues are explored in more detail.

In this paper, we establish the following results in the radially symmetric setting of the eikonal equation:

\begin{enumerate}
    \item \textbf{Selection principle:} We prove that the maximal solution among all radially symmetric viscosity solutions is selected by the vanishing viscosity process (Theorem~3.1).
    
    \item \textbf{Quantitative convergence rate:} We derive an explicit estimate for the convergence as $\epsilon \to 0$:
    \[
    |u^\epsilon(x) - u(x)| \le C \epsilon |\log \epsilon|\,(1 - |x| - \log|x|),
    \quad x \in \mathcal{B}(0,1) \setminus \{0\},
    \]
    as stated in Theorem~4.1.
    To the best of our knowledge, the results of Theorems~3.1 and 4.1 are new in the literature.
    
    \item \textbf{Uniqueness discussion:} We analyze the uniqueness of viscosity solutions for the eikonal equation when $V$ vanishes at certain points, and provide illustrative examples in Section~5.
    The result in this part is well known; see \cite{LeMitakeTran2017} for example.
\end{enumerate}

\section{Preliminaries}
As $V$ is radially symmetric, it is natural to look for radially symmetric solutions to \eqref{vanishingviscosity} where $u^{\epsilon}(x)=u^{\epsilon}(|x|)$ for $x\in \overline{\cB(0,1)}$. Now \eqref{vanishingviscosity} has been simplified into an ODE:
\begin{equation}\label{ODEvanishingviscosity}
\left\{
\begin{aligned}
    |(u^{\epsilon})^{'}|-V &= \epsilon\left((u^{\epsilon})^{''}+\frac{n-1}{r}(u^{\epsilon})^{'}\right)  &\quad \text{in} \ (0,1),\\
    u^\epsilon (1) &=0 &r=1.
\end{aligned} 
\right.
\end{equation}
To solve \eqref{ODEvanishingviscosity}, we first assume $(u^\epsilon )^{'}(r)\le 0$ for $r\in(0,1)$. We will check our assumption after we get $u^\epsilon$.

Let $p^\epsilon (r)= (u^\epsilon )^{'} (r) $, then $p^\epsilon$ satisfies a nonhomogeneous first order ODE:
\begin{equation}\label{ODE_p_epsilon}
    (p^{\epsilon})^{'} + \left(\frac{n-1}{r}+\frac{1}{\epsilon}\right)p^{\epsilon} = -\frac{V}{\epsilon}.
\end{equation}
Using the integrating factor method, we get the solution $p^{\epsilon}(r)$ to \eqref{ODE_p_epsilon}:
\[
    p^{\epsilon}(r)= -\frac{1}{\epsilon}r^{-n+1}e^{-\frac{r}{\epsilon}}  \int_{0}^{r} s^{n-1}e^{\frac{s}{\epsilon}}V(s)\,ds + Cr^{-n+1}e^{-\frac{r}{\epsilon}}
\]
where $C$ is a constant to be chosen.

Since $u^\epsilon$ is radially symmetric, we require $p^\epsilon$ to satisfy 
$\lim_{r\rightarrow 0^+}p^{\epsilon}(r)=0$ so that $(u^{\epsilon}) ^{'}(0)=0$ and $u^{\epsilon}$ is smooth enough at $r=0$. 
Therefore $C=0$, and $p^\epsilon$ becomes:
\begin{equation*}
      (u^{\epsilon})^{'}(r)= p^{\epsilon}(r)= -\frac{1}{\epsilon}r^{-n+1}e^{-\frac{r}{\epsilon}}  \int_{0}^{r} s^{n-1}e^{\frac{s}{\epsilon}}V(s)\,ds.
\end{equation*}
Recalling our boundary condition $u^{\epsilon}(1)=0$, we now can get $u^{\epsilon}$: For $r\in[0,1]$, set 
\begin{equation*}
    u^{\epsilon}(r) = \int_{1}^{r} p^{\epsilon}(s) \,ds.  
\end{equation*}
However, according to the process above, $u^{\epsilon}$ we found satisfies \eqref{ODEvanishingviscosity} and hence   \eqref{vanishingviscosity} classically in the open interval $(0,1)$. We still need to check whether $u^{\epsilon}$ satisfies \eqref{vanishingviscosity} at 0.

To verify this, we compute the partial derivatives of $u^{\epsilon}$ at 0.
Let $\{e_1,e_2,\cdots,e_n\}$ be the canonical basis of $\mathbb{R}^n$.
Since  
\begin{equation*}
    u^{\epsilon}(0)= \int_{1}^{0}(u^{\epsilon})^{'}(s)\,ds,   
\end{equation*}
by L'Hopital's rule,
\begin{equation*}
\begin{aligned}
    u_{x_i}^{\epsilon}(0) 
    &= \lim_{h \rightarrow 0^+}\frac{u^{\epsilon}(he_i)-u^{\epsilon}(0)}{h} \\
    &= \lim_{h \rightarrow 0^+} \frac{\int_{0}^{h}(u^{\epsilon})^{'}(s)\, ds}{h}\\
    &=\lim_{h\rightarrow 0^+}(u^{\epsilon})^{'}(h)\\
    &=(u^{\epsilon})^{'}(0)\\
    &=0,
\end{aligned}
\end{equation*}
which implies that $u^{\epsilon}\in C^1(\cB(0,1))$. Next, by the L'Hopital rule again,

\begin{equation*}
\begin{aligned}
    u_{x_i x_i}^{\epsilon}(0) &= \lim_{h \rightarrow 0^+}\frac{u^{\epsilon}_{x_i}(he_i)-u^{\epsilon}_{x_i}(0)}{h} =  \lim_{h \rightarrow 0^+}\frac{p^{\epsilon}(h)}{h} \\
    &= \lim_{h \rightarrow 0^+}\frac{ -\frac{1}{\epsilon}h^{-n+1}e^{-\frac{h}{\epsilon}}  \int_{0}^{h} s^{n-1}e^{\frac{s}{\epsilon}}V(s)\,ds }{h}\\
    &= \lim_{h \rightarrow 0^+}-\frac{1}{\epsilon} e^{\frac{-h}{\epsilon}}\cdot \lim_{h \rightarrow 0^+}\frac{ \int_{0}^{h} s^{n-1}e^{\frac{s}{\epsilon}}V(s)\,ds}{h^n}\\
    &= -\frac{1}{\epsilon}\lim_{h \rightarrow 0^+}\frac{ \int_{0}^{h} s^{n-1}e^{\frac{s}{\epsilon}}V(s)\,ds}{h^n}\\
    &= -\frac{1}{\epsilon}\lim_{h \rightarrow 0^+}\frac{h^{n-1}e^{\frac{h}{\epsilon}} V(h)}{nh^{n-1}} \\
    &= -\frac{V(0)}{n\epsilon}. 
\end{aligned}
\end{equation*}
In this way,
\begin{equation*}
    Du^{\epsilon}(0) = (u^{\epsilon}_{x_1}(0),u^{\epsilon}_{x_2}(0),\cdots ,u^{\epsilon}_{x_n}(0))=(0,0,\cdots,0) ,
\end{equation*}
and
\begin{equation*}
     \epsilon\Delta u^{\epsilon}(0)= \epsilon \sum_{i=1}^n u^{\epsilon}_{x_i x_i}(0)= -V(0).
\end{equation*}
Therefore, 
\begin{equation*}
    |Du^{\epsilon}(0)|-V(0) =0-V(0)=-V(0)= \epsilon \Delta u^{\epsilon}(0).
\end{equation*}
Hence, we know $u^\epsilon$ solves \eqref{vanishingviscosity} classically at $x=0$. 
At the same time, since $V(s) \ge 0$ on $[0,1)$, then 
\begin{equation*}
    (u^{\epsilon})^{'}(r)= p^{\epsilon}(r)= -\frac{1}{\epsilon}r^{-n+1}e^{-\frac{r}{\epsilon}}  \int_{0}^{r} s^{n-1}e^{\frac{s}{\epsilon}}V(s)\,ds \le 0,
\end{equation*}
which confirms that our assumption is correct.

We thus have the formulas for $u^{\epsilon}$: For $|x|=r\in[0,1]$,

\[ 
     (u^{\epsilon})^{'}(r) = p^{\epsilon}(r)= -\frac{1}{\epsilon}r^{-n+1}e^{-\frac{r}{\epsilon}}  \int_{0}^{r} s^{n-1}e^{\frac{s}{\epsilon}}V(s)\,ds,
\]
\begin{equation}\label{solution_formula}
u^{\epsilon}(r) = \int_1^r p^{\epsilon}(s) \, ds = \int_r^1 \left(\frac{1}{\epsilon}\int_0^s \left(\frac{t}{s}\right)^{n-1} e^{\frac{t-s}{\epsilon}}V(t)\, dt\right)\, ds. 
\end{equation}
And we set, for $|x|=r\in[0,1]$,
\begin{equation}\label{u(r)}
     u(r) = \int_r^1 V(s)\, ds. 
\end{equation}
We remark that $u$ set in \eqref{u(r)} is the maximal viscosity solution to \eqref{eikonal} among all possible radially symmetric viscosity solutions to \eqref{eikonal}.
\section{Qualitative convergence argument of the vanishing viscosity process}
In this section, we will show that $u^{\epsilon}$ converges to $u$ as $\epsilon \rightarrow 0^{+}$ and discuss the convergence rate in the next section. We begin by stating the main result and then provide a proof.
\begin{thm}\label{thm:convergence}
As in \eqref{solution_formula}--\eqref{u(r)}, let $\{u^\epsilon\}_{\epsilon>0}$ and $u$ be solutions to \eqref{vanishingviscosity} and \eqref{eikonal} respectively under the radially symmetric setting. Uniformly for  $0\le r\le1$, we have
\[
\lim_{\epsilon \to 0^+} u^\epsilon(r) = u(r).
\]
\end{thm}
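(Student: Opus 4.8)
The plan is to work directly with the explicit formulas. Write $q^\epsilon(s):=-p^\epsilon(s)=\frac1\epsilon\int_0^s (t/s)^{n-1}e^{(t-s)/\epsilon}V(t)\,dt\ge 0$, so that by \eqref{solution_formula}--\eqref{u(r)} one has $u^\epsilon(r)=\int_r^1 q^\epsilon(s)\,ds$ and hence
\[
u^\epsilon(r)-u(r)=\int_r^1\bigl(q^\epsilon(s)-V(s)\bigr)\,ds,\qquad r\in[0,1].
\]
Since $|u^\epsilon(r)-u(r)|\le\int_0^1|q^\epsilon(s)-V(s)|\,ds$ for \emph{every} $r\in[0,1]$, the theorem reduces to the single statement $\int_0^1|q^\epsilon(s)-V(s)|\,ds\to0$ as $\epsilon\to0^+$, and the uniformity in $r$ comes for free.

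The first step is to rescale the exponential kernel. Substituting $t=s-\epsilon\tau$ gives
\[
q^\epsilon(s)=\int_0^{s/\epsilon}\Bigl(1-\frac{\epsilon\tau}{s}\Bigr)^{n-1}e^{-\tau}\,V(s-\epsilon\tau)\,d\tau ,
\]
which exhibits $q^\epsilon(s)$ as a weighted average of the values of $V$ on $[0,s]$ against the density $e^{-\tau}$ on $(0,\infty)$ (total mass $1$), damped by the factor $(1-\epsilon\tau/s)^{n-1}\in[0,1]$. From this I read off at once the uniform bound $0\le q^\epsilon(s)\le\|V\|_{L^\infty(0,1)}=:M$, hence $|q^\epsilon(s)-V(s)|\le 2M$ on $(0,1]$. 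Next, for each fixed $s\in(0,1]$ the integrand converges pointwise in $\tau$ to $e^{-\tau}V(s)$ and is dominated by $Me^{-\tau}$, so by dominated convergence $q^\epsilon(s)\to V(s)\int_0^\infty e^{-\tau}\,d\tau=V(s)$. Concretely, using that $V$ is $L$-Lipschitz together with $0\le 1-(1-x)^{n-1}\le (n-1)x$ for $x\in[0,1]$ (Bernoulli), one even gets $|q^\epsilon(s)-V(s)|\le C\epsilon/s+Me^{-s/\epsilon}$ with $C=L+(n-1)M$; this quantitative form is the seed of the estimate in the next section.

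With these two ingredients in hand, the conclusion follows from the dominated convergence theorem on the finite interval $[0,1]$, with dominating function the constant $2M$: $\int_0^1|q^\epsilon(s)-V(s)|\,ds\to0$. If one prefers to avoid invoking measure theory, the same conclusion is reached by splitting $\int_0^1=\int_0^\delta+\int_\delta^1$: the first piece is at most $2M\delta$ by the uniform bound, while on $[\delta,1]$ the estimate $|q^\epsilon(s)-V(s)|\le C\epsilon/\delta+Me^{-\delta/\epsilon}$ tends to $0$ uniformly in $s$, so letting first $\epsilon\to0^+$ and then $\delta\to0^+$ finishes the argument.

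The only genuine subtlety — and the point I expect to be the crux — is that the convergence $q^\epsilon(s)\to V(s)$ is \emph{not} uniform as $s\to0^+$: when $s$ is of order $\epsilon$ the cutoff $s/\epsilon$ is $O(1)$, the full unit mass of the kernel no longer fits inside $[0,s]$, and the factor $(1-\epsilon\tau/s)^{n-1}$ is no longer close to $1$, so $q^\epsilon(s)$ can differ from $V(0)$ by an $O(1)$ amount there. This causes no harm for Theorem~\ref{thm:convergence}, since that bad region has small measure and $q^\epsilon-V$ stays uniformly bounded, but it is precisely the mechanism responsible for the $\log|x|$ factor appearing in the local convergence rate of the following section.
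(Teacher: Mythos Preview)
Your proof is correct and follows the same core idea as the paper's: perform the change of variable that turns $q^\epsilon(s)$ into an average of $V$ against the exponential kernel $e^{-\tau}$, use the uniform bound $0\le q^\epsilon\le M$, and then apply dominated convergence twice (once in $\tau$ to get $q^\epsilon(s)\to V(s)$ for each $s>0$, once in $s$ to pass to the integral).

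The one genuine difference is in how you extract \emph{uniformity} in $r$. The paper first obtains pointwise convergence $u^\epsilon(r)\to u(r)$ for each $r$, and then invokes Arzel\`a--Ascoli, using the equicontinuity coming from $|p^\epsilon|\le M$, to upgrade to uniform convergence on $[0,1]$. You instead observe at the outset that $\sup_{r\in[0,1]}|u^\epsilon(r)-u(r)|\le\int_0^1|q^\epsilon(s)-V(s)|\,ds$, so that $L^1$ convergence of $q^\epsilon-V$ on $[0,1]$ already gives the uniform statement directly. This is cleaner and avoids the compactness argument entirely; it also makes transparent why the non-uniformity of $q^\epsilon\to V$ near $s=0$ is harmless here (small measure, bounded integrand), which you correctly identify as the mechanism behind the logarithmic loss in the next section. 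Your intermediate quantitative bound $|q^\epsilon(s)-V(s)|\le C\epsilon/s+Me^{-s/\epsilon}$ is also sharper than what the paper records at this stage and indeed anticipates the computation in Theorem~4.1.
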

\begin{proof}
We first show $-p^{\epsilon}(s)\rightarrow V(s)$ as $\epsilon\rightarrow 0^+$ for $s>0$.\\
Recall our formula for $-p^{\epsilon}$:
\[ 
-p^{\epsilon}(s)= \frac{1}{\epsilon}\int_{0}^{s} \left(\frac{t}{s}\right)^{n-1}e^{\frac{t-s}{\epsilon}}V(t)\,dt.
\] 
Let $t=s+\epsilon y$, then $p^{\epsilon}$ becomes:
\begin{equation*}
\begin{aligned}
    p^{\epsilon}(s) &= \int_{-s/\epsilon}^0\left(\frac{s+\epsilon y}{s}\right)^{n-1}e^y V(s+\epsilon y)\, dy\\
                &= \int \left(\frac{s+\epsilon y}{s}\right)^{n-1}e^y V(s+\epsilon y) \mathbbm{1}_{(-s/\epsilon,0)}(y)\, dy.
\end{aligned}
\end{equation*}
Also, we can write $V$ as:
\[
    V(s) = \int e^y V(s) \mathbbm{1}_{(-\infty,0)} (y)\, dy.
\]
We see that the integrand 
\[
    \left(\frac{s+\epsilon y}{s}\right)^{n-1}e^y V(s+\epsilon y) \mathbbm{1}_{(-s/\epsilon,0)}(y) \rightarrow e^y V(s) \mathbbm{1}_{(-\infty,0)} (y) 
\]
for $s>0$, as $\epsilon \rightarrow 0^+$, since $V$ is continuous. 

We also see that
\[
    \left|\left(\frac{s+\epsilon y}{s}\right)^{n-1}e^y V(s+\epsilon y) \mathbbm{1}_{(-s/\epsilon,0)}(y)\right| \le Me^y \mathbbm{1}_{(-\infty,0)}(y)              
\]
for some $M>0$ and fixed $s>0$, as $V$ is bounded. 

Since $ Me^y \mathbbm{1}_{(-\infty,0)}(y)\in$ $L^1 (\mathbb{R})$, by the dominated convergence theorem,
\[
    \int \left(\frac{s+\epsilon y}{s}\right)^{n-1}e^y V(s+\epsilon y) \mathbbm{1}_{(-s/\epsilon,0)}(y)\, dy \rightarrow \int e^y V(s)\mathbbm{1}_{(-\infty,0)}(y)\, dy,
\]
i.e., we have $p^{\epsilon}(s)\rightarrow V(s)$ as $\epsilon \rightarrow 0^+$, for $s>0$.
Notice that 
\[
    |p^{\epsilon}(s)| = \left|\int_{-s/\epsilon}^0\left(\frac{s+\epsilon y}{s}\right)^{n-1}e^y V(s+\epsilon y)\, dy\right|\le M\int_{-s/\epsilon}^0 e^y \, dy \le M \int_{-\infty}^0 e^y dy=M
\]
since $V$ is bounded. 

Applying the dominated convergence theorem again, we get 
\[
    \int_r^1 -p^{\epsilon}(s)\, ds \rightarrow \int_r^1 V(s)
\]
as $\epsilon \rightarrow 0^+$, as desired.
Hence we proved $u^{\epsilon}(r) \rightarrow u(r)$ for $r\in [0,1]$, as $\epsilon \rightarrow 0^+$.

As $|p^{\epsilon}|\le M$, we have $\{u^{\epsilon}\}_{\epsilon \in (0,1)}$ is equi-continuous. By the Azel\`{a}-Ascoli theorem and the above, we imply that $u^{\epsilon}\rightarrow u$ uniformly on $[0,1]$.
\end{proof}

\section{rate of convergence of the vanishing viscosity process}
As in the previous section, we first present the main result, namely the convergence rate, and then provide the proof.
\begin{thm}\label{thm:rateconvergence}
    Let $\{ u^\epsilon \}_{\epsilon > 0}$ and $u$ be the radially symmetric solutions to  \eqref{vanishingviscosity} and \eqref{eikonal}, respectively, as defined in \eqref{solution_formula}--\eqref{u(r)}. Then, for all $0<r\le 1$, we have
\[
    |u^{\epsilon}(r)-u(r)|\le C\epsilon|\log \epsilon|(1-r-\log r) 
\]
for some positive $C>0$ independent of $\epsilon,r\in (0,1]$.
\end{thm}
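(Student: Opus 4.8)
The plan is to read everything off the explicit formulas \eqref{solution_formula}--\eqref{u(r)}. Subtracting them, for $0<r\le 1$ one has the exact identity
\[
u^{\epsilon}(r)-u(r)=\int_{r}^{1}\bigl(-p^{\epsilon}(s)-V(s)\bigr)\,ds ,
\]
so the theorem reduces to a pointwise-in-$s$ estimate of the error $-p^{\epsilon}(s)-V(s)$, followed by an integration over $[r,1]$ that is sharp enough near $s=0$ to reproduce the weight $1-r-\log r$.

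For the pointwise estimate I would reuse the substitution $t=s+\epsilon y$ from the proof of Theorem~\ref{thm:convergence}, writing
\[
-p^{\epsilon}(s)=\int_{-s/\epsilon}^{0}\Bigl(\tfrac{s+\epsilon y}{s}\Bigr)^{n-1}e^{y}V(s+\epsilon y)\,dy ,
\qquad
V(s)=\int_{-\infty}^{0}e^{y}V(s)\,dy .
\]
Then $-p^{\epsilon}(s)-V(s)$ is the difference of a tail term $\int_{-\infty}^{-s/\epsilon}e^{y}V(s)\,dy$, bounded by $\|V\|_{\infty}e^{-s/\epsilon}\le\|V\|_{\infty}\,\epsilon/s$ (using $e^{x}\ge x$), and a main term $\int_{-s/\epsilon}^{0}e^{y}\bigl[(\tfrac{s+\epsilon y}{s})^{n-1}V(s+\epsilon y)-V(s)\bigr]\,dy$. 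In the bracket I would insert $\pm(\tfrac{s+\epsilon y}{s})^{n-1}V(s)$: Lipschitz continuity of $V$ gives $|V(s+\epsilon y)-V(s)|\le L\epsilon|y|$, while the elementary inequality $1-u^{n-1}\le(n-1)(1-u)$ for $u\in[0,1]$, applied to $u=\tfrac{s+\epsilon y}{s}\in(0,1)$ (which holds because $y<0$), gives $\bigl|(\tfrac{s+\epsilon y}{s})^{n-1}-1\bigr|\le(n-1)\tfrac{\epsilon|y|}{s}$. Using $0<\tfrac{s+\epsilon y}{s}<1$ and $\int_{-\infty}^{0}|y|e^{y}\,dy=1$, this yields
\[
\bigl|-p^{\epsilon}(s)-V(s)\bigr|\le C\Bigl(\epsilon+\frac{\epsilon}{s}\Bigr),\qquad 0<s\le 1 ,
\]
with $C$ depending only on $n$, the Lipschitz constant of $V$, and $\|V\|_{\infty}$.

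Integrating this bound over $[r,1]$ gives
\[
|u^{\epsilon}(r)-u(r)|\le C\int_{r}^{1}\Bigl(\epsilon+\frac{\epsilon}{s}\Bigr)\,ds=C\epsilon\bigl(1-r-\log r\bigr),
\]
which is even stronger than the asserted estimate; the statement as written then follows immediately, since $|\log\epsilon|\ge 1$ once $\epsilon$ is below a fixed threshold (the regime in which the bound carries information). I expect the main obstacle to be the pointwise estimate of the second paragraph — specifically, recognizing that the error degrades only like $\epsilon/s$ as $s\to 0^{+}$ (the $1/s$ coming jointly from the geometric weight $(t/s)^{n-1}$ and the exponential tail $e^{-s/\epsilon}$), since this is precisely what integrates to the $-\log r$ contribution and nothing worse; the remaining ingredients — the change of variables (already carried out in Section~3), the Lipschitz bound, the elementary algebraic inequalities, and $e^{x}\ge x$ — are routine.
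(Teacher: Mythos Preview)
Your argument is correct and follows the same skeleton as the paper's proof: the same change of variables $t=s+\epsilon y$, the same split into a tail on $(-\infty,-s/\epsilon)$ and a main term on $(-s/\epsilon,0)$, and the same insertion of $\pm\bigl(\tfrac{s+\epsilon y}{s}\bigr)^{n-1}V(s)$ to separate the Lipschitz and geometric contributions. The one substantive difference is in how the main term is bounded. The paper further splits $(-s/\epsilon,0)$ at $y=\log\epsilon$, crudely bounds the integrand on $(-s/\epsilon,\log\epsilon)$ by a constant (picking up $C\epsilon$), and on $(\log\epsilon,0)$ replaces $|y|$ by its maximum $|\log\epsilon|$ before integrating $e^{y}$ --- this last step is exactly where the extra $|\log\epsilon|$ factor enters. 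You instead keep the weight $|y|e^{y}$ intact and use $\int_{-\infty}^{0}|y|e^{y}\,dy=1$, which avoids that loss and yields the sharper pointwise bound $\bigl|-p^{\epsilon}(s)-V(s)\bigr|\le C\epsilon(1+1/s)$ and hence $|u^{\epsilon}(r)-u(r)|\le C\epsilon(1-r-\log r)$, with no logarithm in $\epsilon$ at all. So your route is not merely an alternative but an improvement; the paper's stated inequality follows from yours for $\epsilon\le e^{-1}$, and your comment that the $|\log\epsilon|$ factor is only needed ``in the regime in which the bound carries information'' correctly handles the (harmless) discrepancy near $\epsilon=1$.
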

 We note that a similar convergence rate $\epsilon |\log \epsilon|$ appears in the recent works \cite{QianSprekelerTranYu2024, ChaintronDaudin2025, CirantGoffi2025} for the vanishing viscosity process with uniformly convex Hamiltonians.
 Our result is different as it is for the eikonal equation with $H(x,p)=|p|-V(x)$, which is convex but not uniformly convex in $p$. This resemblance might hint at underlying structural connections worth investigating further. 
\begin{proof}
To find the rate of convergence for $u^{\epsilon}\rightarrow u$, we first turn to focus on the convergence rate of their derivatives. \\
In the following part, we study the rate of convergence for $-p^{\epsilon}(r)\rightarrow V(r)$.

From the discussion above, we have 
\begin{equation*}
    -p^{\epsilon}(r)= \frac{1}{\epsilon}\int_0^r\left(\frac{s}{r}\right)^{n-1}e^{\frac{s-r}{\epsilon}}V(s)\, ds.
\end{equation*}
Let $\frac{s-r}{\epsilon}=t$, then 
\begin{equation*}
    -p^{\epsilon}(r)=\int_{-\frac{r}{\epsilon}}^0(1+\frac{\epsilon}{r}t)^{n-1}e^tV(r+\epsilon t)\, dt.
\end{equation*}
Now we consider $|V(r)-(-p^{\epsilon}(r))|$. We have
\begin{equation*}
\begin{aligned}
    |V(r)-(-p^{\epsilon}(r))|
    &=\left|\int_{-\infty}^0e^tV(r)dt-\int_{-\frac{r}{\epsilon}}^0(1+\frac{\epsilon}{r}t)^{n-1}e^tV(r+\epsilon t)\, dt\right| \\
    &=\left|\int_{-\infty}^{-\frac{r}{\epsilon}}e^tV(r)\,dt+\int_{-\frac{r}{\epsilon}}^0 e^tV(r)dt -\int_{-\frac{r}{\epsilon}}^0(1+\frac{\epsilon}{r}t)^{n-1}e^tV(r+\epsilon t)\, dt\right| \\
    &= \left|\int_{-\infty}^{-\frac{r}{\epsilon}}e^tV(r)\,dt\right|+\int_{-\frac{r}{\epsilon}}^0 e^t|V(r)-(1+\frac{\epsilon}{r}t)^{n-1}V(r+\epsilon t)|\, dt\\
    &= \text{\RNum{1}} + \text{\RNum{2}}.
\end{aligned}
\end{equation*}
At this point, we bound \RNum{1} and \RNum{2} separately. We first control \RNum{1},
\begin{equation*}
\begin{aligned}
    \text{\RNum{1}} &= \left|\int_{-\infty}^{-\frac{r}{\epsilon}}e^tV(r)\,dt\right|\\
                    &= |V(r)e^{-\frac{r}{\epsilon}}|\\
                    &= |V(r)|e^{-\frac{r}{\epsilon}}\\
                    &\le Ce^{-\frac{r}{\epsilon}},
\end{aligned}
\end{equation*}
where we use $V(r)$ is bounded.

As for \RNum{2}, we separate it into integrals on two different intervals and bound them respectively. 
\begin{equation*}
    \text{\RNum{2}} = \left(\int_{-\frac{r}{\epsilon}}^{\log\epsilon}+\int_{\log{\epsilon}}^0\right)e^t|V(r)-(1+\frac{\epsilon}{r}t)^{n-1}V(r+\epsilon t)|\,dt
    = \text{\RNum{3}}+\text{\RNum{4}}.
\end{equation*}
We compute
\begin{equation*}
\begin{aligned}
    \text{\RNum{3}}&=\int_{-\frac{r}{\epsilon}}^{\log\epsilon}e^t|V(r)-(1+\frac{\epsilon}{r}t)^{n-1}V(r+\epsilon t)|\,dt\\
    &\le C\int_{-\frac{r}{\epsilon}}^{\log\epsilon}e^t\,dt\\
    &= \left.Ce^t \right|_{-\frac{r}{\epsilon}}^{\log{\epsilon}}\\
    &= C(\epsilon-e^{-\frac{r}{\epsilon}})\\
    &\le C\epsilon
\end{aligned}
\end{equation*}
for $r>0$.
Also, 
\begin{equation*}
\begin{aligned}
    \text{\RNum{4}}&= \int_{\log\epsilon}^{0}e^t|V(r)-(1+\frac{\epsilon}{r}t)^{n-1}V(r+\epsilon t)|\,dt\\
    &= \int_{\log\epsilon}^{0}e^t|V(r)-(1+\frac{\epsilon}{r}t)^{n-1}V(r)+(1+\frac{\epsilon}{r}t)^{n-1}V(r)-(1+\frac{\epsilon}{r}t)^{n-1}V(r+\epsilon t)|\,dt.
\end{aligned}
\end{equation*}
Now we focus on the integrand
\begin{equation*}
\begin{aligned}
    &|V(r)-(1+\frac{\epsilon}{r}t)^{n-1}V(r)+(1+\frac{\epsilon}{r}t)^{n-1}V(r)-(1+\frac{\epsilon}{r})^{n-1}V(r+\epsilon t)|\\
    &\le |V(r)|(1-(1+\frac{\epsilon}{r}t)^{n-1})+(1+\frac{\epsilon}{r}t)^{n-1}|V(r)-V(r+\epsilon t)|.
\end{aligned}
\end{equation*}
For the term $1-(1+\frac{\epsilon}{r}t)^{n-1}$, we see that 
\begin{equation*}
\begin{aligned}
    1-(1+\frac{\epsilon}{r}t)^{n-1}=1-(1+(n-1)\frac{\epsilon}{r}t+\cdots+(\frac{\epsilon}{r}t)^{n-1}).
\end{aligned}
\end{equation*}
Noticing that $-\frac{r}{\epsilon}\le \log{\epsilon}\le t\le 0$, we have $|\frac{\epsilon}{r}t|\le 1$.
Thus, 
\begin{equation*}    
|1-(1+\frac{\epsilon}{r}t)^{n-1}|\le C\frac{\epsilon}{r}|t|.
\end{equation*}
For the other term $(1+\frac{\epsilon}{r}t)^{n-1}|V(r)-V(r+\epsilon t)|$,
\begin{equation*}
\begin{aligned}
    &(1+\frac{\epsilon}{r}t)^{n-1}|V(r)-V(r+\epsilon t)|\\
    &\le |V(r)-V(r+\epsilon t)|\\
    &\le C\epsilon|t| 
\end{aligned}
\end{equation*}
where we use $V$ is Lipschitz continuous.

Finally, we add these bounds together to get
\begin{equation*}
\begin{aligned}
    \text{\RNum{4}}&= \int_{\log\epsilon}^{0}e^t|V(r)-(1+\frac{\epsilon}
    {r})^{n-1}V(r+\epsilon t)|\,dt\\
    &\le \int_{\log\epsilon}^{0}e^t(C_1\frac{\epsilon}{r}|t|+C_2\epsilon|t|)\,dt\\
    &\le\int_{\log\epsilon}^{0}e^t(C_1\frac{\epsilon}{r}|\log{\epsilon}|+C_2\epsilon|\log{\epsilon}|)\,dt\\
    &\le (C_1\frac{\epsilon}{r}|\log{\epsilon}|+C_2\epsilon|\log{\epsilon}|)(1-\epsilon)\\
    &\le C(1+\frac{1}{r})(\epsilon|\log{\epsilon}|).
\end{aligned}
\end{equation*}
Combining all these estimations together and keeping track with $r$, we have
$$|-p^{\epsilon}(r)-V(r)|\le C\epsilon|\log{\epsilon}|(1+\frac{1}{r}).$$\\
Then,
\[
\begin{aligned}
    |u^{\epsilon}(r)-u(r)|&=|\int_{1}^{r} p^{\epsilon}(s) \, ds-\int_1^{r}(-V(s))\,ds|\\
    &\le \int_{r}^{1}|p^{\epsilon}+V(s)|\,ds\\
    &\le C\epsilon |\log\epsilon| \int_{r}^{1}(1+\frac{1}{s})\,ds\\
    &\le C\epsilon|\log\epsilon|(1-r-\log r).
\end{aligned}
\]
\end{proof}
\begin{rem}
    The bound in Theorem 4.1 blows up as $r\rightarrow 0^+$. As of now, we do not know the convergence rate of $u^{\epsilon}(0)-u(0)$.
\end{rem}
\section{A discussion on the uniqueness of viscosity solutions}
In this section, we start with the following eikonal equation with Dirichlet boundary value condition 
\begin{equation}\label{eikonal2}
\left\{
\begin{aligned}
|Du| &= f(x) \quad \text{in } U,\\
u &= 0 \quad \text{on } \partial U,
\end{aligned} 
\right.    
\end{equation}\\
where $U$ is an open, bounded subset of $\mathbb{R}^n$ and $f: U \rightarrow [0,\infty) $ is a nonnegative continuous function.\\
We give the definitions of viscosity subsolutions, supersolutions, and solutions to \eqref{eikonal2}.
See \cite{Tran2021}.

\begin{defn}[viscosity solutions of \eqref{eikonal2}]
A function $u \in \BUC(U)$, where $\BUC(U)$ denotes the space of bounded, uniformly continuous functions on $U$, is called \\
(a) a viscosity subsolution of \eqref{eikonal2} if for any $\varphi\in C^1(U)$ such that $u-\varphi$ has a local max at $x_0\in U$, then 
\[
|D\varphi(x_0)|-f(x_0) \le 0, 
\]
and $u(x)\le0 $ on $\partial U$;\\
(b) a viscosity supersolution of \eqref{eikonal2} if for any $\psi \in C^1(U)$ such that $u-\psi$ has a local min at $x_0\in U$, then 
\[
|D\psi(x_0)|-f(x_0)\ge 0,
\]
and $u(x)\ge 0$ on $\partial U$;\\
(c) a viscosity solution of \eqref{eikonal2} if it is both a viscosity subsolution and a supersolution.
\end{defn}
Now we give the comparison principle for viscosity solutions of  \eqref{eikonal2} and the proof of it. This result is well known; see \cite{LeMitakeTran2017}. We include it here for completeness.
\begin{thm}
    Let $\mathcal{A}=\{x\in U| \;f(x)=0\}$ and $u$, $v$ be two viscosity solutions of \eqref{eikonal2}. Then, if $u\le v$ on $\mathcal{A}$, we have $u\le v$ in $U$.
\end{thm}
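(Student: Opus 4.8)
The plan is to prove this by the classical combination of a \emph{strict-subsolution perturbation} with the \emph{doubling-of-variables} technique; the only feature beyond the standard non-degenerate eikonal comparison is a localization step that uses the hypothesis $u\le v$ on $\mathcal A$ to force the relevant maximum points to lie in $\{f>0\}$, where the gained strictness actually bites.

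\textbf{Step 1 (strict subsolution).} First I would regularize the subsolution: for $\theta\in(0,1)$ put $u^\theta:=\theta u$. If $u^\theta-\phi$ has a local maximum at $x_0\in U$ with $\phi\in C^1(U)$, then $u-\phi/\theta$ has a local maximum at $x_0$, so the subsolution property of $u$ gives $|D\phi(x_0)|\le\theta f(x_0)$, whence $|D\phi(x_0)|-f(x_0)\le-(1-\theta)f(x_0)$. Thus $u^\theta$ is a viscosity subsolution of $|Dw|-f=-(1-\theta)f$ in $U$ — strictly subsolving wherever $f>0$ — and still $u^\theta\le 0$ on $\partial U$. Combining $u\le v$ on $\mathcal A$, $u\le0\le v$ on $\partial U$, and the boundedness of $u$, one has $u^\theta-v=(u-v)+(\theta-1)u\le(1-\theta)\|u\|_{L^\infty(U)}=:(1-\theta)K$ on $\mathcal A\cup\partial U$. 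It therefore suffices to show $\sup_U(u^\theta-v)\le(1-\theta)K$ for every $\theta\in(0,1)$: letting $\theta\to1^-$ and using $u^\theta\to u$ uniformly then yields $u\le v$ in $U$.

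\textbf{Step 2 (doubling of variables).} Suppose, for contradiction, $M_\theta:=\sup_{\overline U}(u^\theta-v)>(1-\theta)K$; the supremum is attained and equals $\sup_U(u^\theta-v)$ since $u^\theta-v$ extends to an element of $C(\overline U)$. I would first note that any maximum point $\hat x$ lies in $U\setminus\mathcal A$ with $f(\hat x)>0$, because on $\partial U\cup\mathcal A$ one has $u^\theta-v\le(1-\theta)K<M_\theta$. Then, doubling variables, consider $\Phi_\epsilon(x,y)=u^\theta(x)-v(y)-|x-y|^2/(2\epsilon)$ on $\overline U\times\overline U$ with maximizer $(x_\epsilon,y_\epsilon)$; the standard penalization lemma (see \cite{Tran2021}) gives, along a subsequence $\epsilon\to0^+$, that $|x_\epsilon-y_\epsilon|^2/\epsilon\to0$ and $(x_\epsilon,y_\epsilon)\to(\hat x,\hat x)$ for such a maximum point, so $x_\epsilon,y_\epsilon\in U$ for $\epsilon$ small. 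Testing the strict subsolution inequality for $u^\theta$ with $x\mapsto v(y_\epsilon)+|x-y_\epsilon|^2/(2\epsilon)$ at $x_\epsilon$ and the supersolution inequality for $v$ with $y\mapsto u^\theta(x_\epsilon)-|x_\epsilon-y|^2/(2\epsilon)$ at $y_\epsilon$, and writing $p_\epsilon=(x_\epsilon-y_\epsilon)/\epsilon$, I obtain $f(y_\epsilon)\le|p_\epsilon|\le\theta f(x_\epsilon)$. Passing to the limit (continuity of $f$, $x_\epsilon,y_\epsilon\to\hat x$) gives $f(\hat x)\le\theta f(\hat x)$, i.e. $(1-\theta)f(\hat x)\le0$, contradicting $\theta<1$ and $f(\hat x)>0$. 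Hence $M_\theta\le(1-\theta)K$, and the theorem follows.

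\textbf{Main obstacle.} The crux — and the only place the hypothesis beyond the boundary conditions enters — is the localization in Step 2: one must rule out maximum points of the \emph{perturbed} difference $u^\theta-v$ touching $\partial U$ or the degeneracy set $\mathcal A$, so that $f>0$ there and the strictness gained in Step 1 produces a genuine contradiction rather than the vacuous $f(\hat x)\le\theta f(\hat x)$. Everything else is the textbook eikonal comparison argument; the only extra care is bookkeeping of the $O(1-\theta)$ slack arising from comparing $\theta u$ instead of $u$ with $v$ on $\mathcal A\cup\partial U$.
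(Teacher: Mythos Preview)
Your proof is correct and follows essentially the same route as the paper: scale the subsolution by $\theta<1$ to gain strictness wherever $f>0$, double variables, and reach the contradiction $f(\hat x)\le\theta f(\hat x)$ with $f(\hat x)>0$. The only cosmetic difference is in the localization bookkeeping: the paper perturbs $v$ to $v+\epsilon$ and excises an open neighborhood $\mathcal P\supset\mathcal A$ on which $su-(v+\epsilon)<-\epsilon/2$, then doubles on $\overline U\setminus\mathcal P$; you instead carry the explicit slack $(1-\theta)\|u\|_{L^\infty}$ on $\mathcal A\cup\partial U$ and double on all of $\overline U$. Both devices serve the same purpose---forcing the limiting maximum point into $\{f>0\}$---and your version is arguably a bit leaner since it avoids the auxiliary neighborhood and the second small parameter.
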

\begin{proof}
    Using the same notation as in the theorem. We suppose $u,v$ to be two viscosity solutions to \eqref{eikonal2} and $u\le v$ on $\mathcal{A}$.
    
    First, fix $\epsilon>0$. We consider the set $\mathcal{P}=\bigcup_{a\in\mathcal{A}}\cB(a,\delta)$, i.e., the union of balls with radius \( \delta \) centered at points in \( \mathcal{A} \). Since $u\le v$ in $\mathcal{A}$, we are able to choose $\delta_{\epsilon}>0$ small enough such that 
    \[
    su-(v+\epsilon)< -\frac{\epsilon}{2} \quad \text{in} \ \overline{\mathcal{P}}
    \]
for $s<1$ and $s$ close enough to 1.

For convenience, we denote $u^s = su$ and $v^{\epsilon}= v+\epsilon$ and our goal is to show $u^s \le v^{\epsilon}$ in $U$ for $\epsilon >0$ small and $s<1$ close to 1.\\
We continue our proof by using the classical ``doubling variable'' method. Assume by contradiction that 
\[
    \sup_{x\in \bar{U}\setminus\mathcal{P}}(u^s (x)-v^{\epsilon}(x)) = u^s (\bar x)-v^{\epsilon}(\bar x)= \sigma >0 \; \text{for some} \ \bar x\in \bar U\setminus \mathcal{P}.
\]
We see that:
\[
    u^{s}(x)-v^{\epsilon}(x) = su(x)-v(x)-\epsilon = 0-\epsilon<0 \; \text{on}\;\partial U \\
\]
and
\[
    u^s (x)-v^{\epsilon}(x) = su(x)-v(x)-\epsilon<-\frac{\epsilon}{2}<0 \;\text{on}\; \partial \mathcal{P}. 
\]\\
Hence, $\bar x $ can only belong to $U\setminus \mathcal{\bar P}$.

Here, we tried to avoid the case that $\bar x$ appears on the boundary because we will apply the viscosity subsolution and supersolution test later.
To be more precise, we need a local max or a local min near $\bar x$, which may not be true when $\bar x$ is on the boundary.
We consider an auxiliary function $\Psi(x,y) $:
\[
    \Psi^{\alpha}(x,y)=u^{s}(x)-v^{\epsilon}(y)-\frac{|x-y|^2}{2\alpha}
\]
and
\[
    \sup_{x,y\in \bar{U}\setminus\mathcal{P}}=\Psi ^{\alpha}(x_\alpha,y_\alpha) \; \text{for some} \; x_\alpha,y_\alpha \in \bar{U}\setminus\mathcal{P}.
\]
Since $(x_\alpha, y_\alpha) \rightarrow (\bar{x}, \bar{x})$ as $\alpha \rightarrow 0^+$, we have $x_\alpha, y_\alpha \in U \setminus \bar{\mathcal{P}}$ for sufficiently small $\alpha$.
Since $u$ is a viscosity solution to the \eqref{eikonal2}, we see that $u^s = su$ is a viscosity solution to the following equation:\\
\begin{equation}\label{eikonal3}
\left\{
\begin{aligned}
|Du^s| &= sf(x) \quad \text{in } U \\
u^s &= 0 \quad \text{on } \partial U
\end{aligned} 
\right.    
\end{equation}.\\
From the above, we know that the auxiliary function $\Psi^{\alpha}(x,y)$ has a max at $(x_\alpha,y_\alpha)$. Therefore, $x\mapsto \Psi^\alpha(x,y_\alpha)$ has a max at $x_\alpha$, which means
\[
    x\mapsto u^s (x)-\frac{|x-y_\alpha|^2}{2\alpha} \quad \text{has a max at} \; x_\alpha.
\]
As $u^s$ is a viscosity solution of \eqref{eikonal3}, by the viscosity subsolution test, we have 
\[
    \frac{1}{\alpha}|x_\alpha-y_\alpha|-sf(x_\alpha)\le0.
\]
Similarly, $y\mapsto \Psi^\alpha (x_\alpha,y)$ has a min at $y_\alpha$, which means 
\[
    y\mapsto v(y)-(-\frac{|x_\epsilon-y|^2}{2\alpha})\quad \text{has a min at} \; y_\alpha.
\]
As $v$ is a viscosity solution of \eqref{eikonal2}, by the viscosity supersolution test, we have 
\[
    \frac{1}{\alpha}|x_\alpha-y_\alpha|-f(y_\alpha)\ge0.
\]
Combine these two inequalities together, we get 
\[
    f(y_\alpha)\le \frac{1}{\alpha}|x_\alpha - y_\alpha|\le sf(x_\alpha).
\]
Take $\alpha \rightarrow 0^+$, we get 
\[
    f(\bar{x})\le sf(\bar{x}).
\]
Since $f(x)$ is nonnegative and  $\mathcal{A}=\{x\in U|f(x)=0\}$, then $f(x)>\gamma>0$ for some positive $\gamma$ in $U\setminus \bar{\mathcal{P}}$. Notice that  $\bar{x}\in U\setminus \bar{\mathcal{P}}$, $f(\bar{x})>0$. Recall that $s<1$, then the inequality 
\[
    f(\bar{x})\le sf(\bar{x})
\]
cannot be true. We get a contradiction. Hence we proved $u^s\le v^{\epsilon}$ in $U\setminus{\bar{\mathcal{P}}}$. 
Also, according to our construction at the beginning of the proof, we have $u^s\le v^\epsilon$ in $\mathcal{\bar{P}}$, too. As a result, $u^s\le v^\epsilon$ in the whole $U$. 

Moreover, $u^s\le v^\epsilon$ in $U$ is true for $s<1$ close to 1 and $\epsilon>0$ small. Then take $s\rightarrow 1^-$ and $\epsilon \rightarrow 0^+$, we have
\[
    u\le v \quad \text{in}\; U.
\]
\end{proof}
From this theorem, we can directly derive the following two corollaries.
\begin{cor}
    Let $u,v$ be two viscosity solutions of \eqref{eikonal2} and $u=v$ on $\mathcal{A}$. Then $u= v$ on $U$.
\end{cor}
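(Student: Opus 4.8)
The plan is to invoke the comparison principle of the preceding theorem twice, once in each direction. Since $u = v$ on $\mathcal{A}$, in particular $u \le v$ on $\mathcal{A}$; as $u$ and $v$ are both viscosity solutions of \eqref{eikonal2}, the theorem immediately yields $u \le v$ throughout $U$.

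For the reverse inequality I would observe that the hypotheses of the theorem are symmetric in the two solutions: nothing distinguishes $u$ from $v$ beyond the label, so the statement applies equally with the roles exchanged. From $v = u$ on $\mathcal{A}$ we get $v \le u$ on $\mathcal{A}$, and the theorem gives $v \le u$ in $U$. Combining the two inequalities $u \le v$ and $v \le u$ in $U$ gives $u = v$ in $U$, as claimed.

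There is essentially no obstacle here; the only point worth a moment's care is confirming that the comparison theorem was stated for an arbitrary ordered pair of viscosity solutions (which it was), so that swapping $u$ and $v$ requires no additional hypothesis or verification. No new estimates, test functions, or doubling arguments are needed beyond what is already established in the proof of the comparison theorem.
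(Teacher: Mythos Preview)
Your argument is correct and matches the paper's intended approach: the paper simply states that the corollary follows directly from the comparison theorem, and applying that theorem once in each direction (using $u\le v$ and then $v\le u$ on $\mathcal{A}$) is exactly how one does so.
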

\begin{cor}
For strictly positive $f$, if a viscosity solution to equation \eqref{eikonal2} exists, then it is unique.
\end{cor}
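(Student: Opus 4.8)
The plan is to reduce the statement immediately to the comparison principle established above. The key observation is that strict positivity of $f$ forces the set $\mathcal{A}=\{x\in U:\;f(x)=0\}$ to be empty, so that any hypothesis of the form ``$u\le v$ on $\mathcal{A}$'' (or ``$u=v$ on $\mathcal{A}$'') is satisfied vacuously.

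Concretely, I would argue as follows. Assume a viscosity solution of \eqref{eikonal2} exists, and let $u$ and $v$ be two such solutions. Since $f(x)>0$ for every $x\in U$, we have $\mathcal{A}=\emptyset$, and hence ``$u\le v$ on $\mathcal{A}$'' holds trivially. The comparison principle (the preceding theorem) then gives $u\le v$ in $U$. Exchanging the roles of $u$ and $v$, the condition ``$v\le u$ on $\mathcal{A}=\emptyset$'' is again vacuous, so the same theorem yields $v\le u$ in $U$. Combining the two inequalities gives $u=v$ in $U$, i.e.\ uniqueness. (Alternatively, one may apply the preceding corollary directly with the vacuous hypothesis $u=v$ on $\mathcal{A}$.)

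There is essentially no obstacle: the only thing to check is the emptiness of $\mathcal{A}$ and the consequent vacuity of the comparison hypothesis, after which the conclusion is immediate from the already-proved comparison result. If desired, I would add one sentence emphasizing that the homogeneous boundary condition is already built into the notion of viscosity solution used here, so no further boundary compatibility needs to be imposed.
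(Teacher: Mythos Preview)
Your proposal is correct and matches the paper's own argument essentially verbatim: the paper also observes that strict positivity of $f$ makes $\mathcal{A}$ empty, so the hypothesis $u=v$ on $\mathcal{A}$ holds vacuously, and uniqueness follows immediately from the preceding comparison result.
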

The proof of this corollary is quite simple. Notice that for strictly positive $f$, the set $\mathcal{A}$ is empty. Then for any two viscosity solutions $u,v$, the assumption that two solutions $u,v$ coincide on $\mathcal{A}$ holds vacuously, and hence the uniqueness follows from the extension property.  \\
Next, we provide an example to demonstrate the second corollary.
\begin{ex}
    Now, let us consider a simplified case where $U=(-1,1)$ and $f(x)=1, x\in(-1,1)$ which is strictly positive. Then \eqref{eikonal2} becomes
\begin{equation}\label{eikonal4}
\left\{
\begin{aligned}
|Du| &= 1 \quad \text{in } (-1,1), \\
u(-1)&=u(1) = 0.
\end{aligned} 
\right.    
\end{equation}
For equation \eqref{eikonal4}, we can show that it has a unique viscosity solution $u(x)$, where
\[
u(x) = 
\begin{cases}
-x+1, & \text{if } 0\le x < 1, \\
x+1,  & \text{if } -1<x<0.
\end{cases}
\]
\begin{figure}[h]
\centering
\begin{tikzpicture}

\begin{tikzpicture}
\begin{axis}[
    axis lines=middle,
    xlabel={$x$},
    ylabel={$u(x)$},
    ymin=-0.5, ymax=2,
    xmin=-3, xmax=3,
    samples=200,
    domain=-3:3,
    width=10cm, height=5cm,
    grid=none
]
\addplot[black, thick, domain=-1:0] {x + 1};
\addplot[black, thick, domain=0:1] {-x + 1};
\end{axis}
\end{tikzpicture}

\end{tikzpicture}
\caption{Plot of $u(x)$.}
\end{figure}
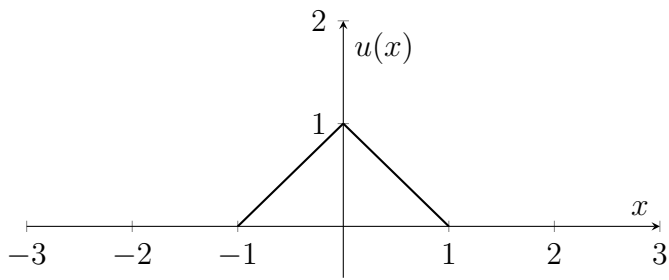
\end{ex}
When $f(x)$ is only nonnegative, things get more complicated. We are allowed to get different viscosity solutions. The following is an example when $f$ is just nonnegative.
See also \cite{LeMitakeTran2017}.
\begin{ex}
    Given a piecewise function $f$, where
\[
f(x) =
\begin{cases}
-x-\frac{1}{2}, & \text{if } -1<x\le-\frac{1}{2},\\
x+\frac{1}{2}, & \text{if } -\frac{1}{2}<x\le0,\\
-x+\frac{1}{2}, & \text{if } 0<x \le \frac{1}{2},\\
x-\frac{1}{2}, & \text{if } \frac{1}{2}< x<1.
\end{cases}
\]
\begin{figure}[h]
\centering
\begin{tikzpicture}
\begin{axis}[
    axis lines=middle,
    xmin=-1.5, xmax=1.5,
    ymin=-0.3, ymax=1,
    xtick={-1,-0.5,0,0.5,1},
    ytick={0,0.5},
    xlabel={$x$},
    ylabel={$f(x)$},
    samples=200,
    domain=-1:1,
    thick,
    grid=none,
    width=10cm, height=5cm,
    legend style={draw=none, at={(0.8,0.85)}}
]

\addplot[black, domain=-0.999:-0.5] {-x - 0.5};

\addplot[black, domain=-0.499:0] {x + 0.5};

\addplot[black, domain=0.001:0.5] {-x + 0.5};

\addplot[black, domain=0.501:0.999] {x - 0.5};

\end{axis}
\end{tikzpicture}
\caption{Plot of $f(x)$.}
\end{figure}
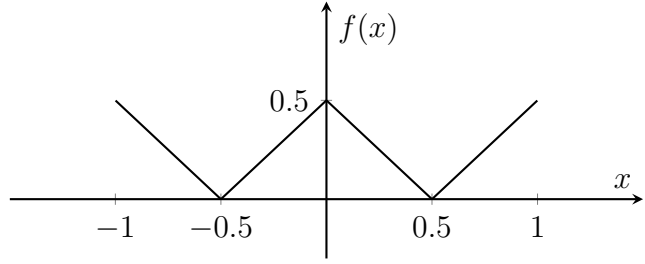
Here we still suppose $U=(-1,1)$. Then \eqref{eikonal2} becomes
\begin{equation}\label{eikonal5}
\left\{
\begin{aligned}
|Du| &= f(x) \quad \text{in } (-1,1), \\
u(-1)&=u(1) = 0.
\end{aligned} 
\right.    
\end{equation}
Equation \eqref{eikonal5} has different viscosity solutions. Consider $u_1(x)$ and $u_2(x)$,
\[
u_1(x)=
\begin{cases}
    \frac{1}{2}(x+\frac{1}{2})^2-\frac{1}{8}, & \text{if } -1<x\le0,\\
    \frac{1}{2}(x-\frac{1}{2})^2-\frac{1}{8}, & \text{if } 0<x<1.
\end{cases}
\]
\begin{figure}[h]
\centering
\begin{tikzpicture}
\begin{axis}[
    axis lines=middle,
    xlabel={$x$},
    ylabel={$u_1(x)$},
    domain=-1:1,
    samples=200,
    ymin=-0.5, ymax=0.5,
    xmin=-2, xmax=2,
    xtick={-1, 0, 1},
    ytick={-0.25, 0, 0.25},
    grid=none,
    width=10cm, height=5cm,  
    thick
]
\addplot[domain=-1:0, black, thick]
    {0.5 * (x + 0.5)^2 - 1/8};

\addplot[domain=0:1, black, thick]
    {0.5 * (x - 0.5)^2 - 1/8};

\end{axis}
\end{tikzpicture}
\caption{Plot of $u_1(x)$.}
\end{figure}
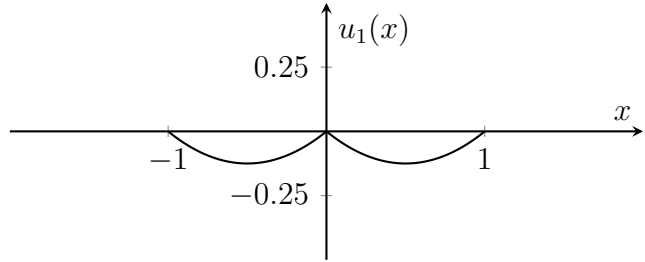
\[
u_2(x)=
\begin{cases}
    -\frac{1}{2}(x+\frac{1}{2})^2+\frac{1}{8}, & \text{if } -1<x\le-\frac{1}{2},\\
    \frac{1}{2}(x+\frac{1}{2})^2+\frac{1}{8}, & \text{if } -\frac{1}{2}<x\le0,\\
    \frac{1}{2}(x-\frac{1}{2})^2+\frac{1}{8}, & \text{if } 0<x\le \frac{1}{2},\\
    -\frac{1}{2}(x-\frac{1}{2})^2+\frac{1}{8}, & \text{if } \frac{1}{2}<x<1.
 \end{cases}
\]
\begin{figure}[h]
\centering
\begin{tikzpicture}
  \begin{axis}[
    axis lines=middle,
    xmin=-1.1, xmax=1.1,
    ymin=-0.1, ymax=0.4,
    xtick={-1, -0.5, 0, 0.5, 1},
    ytick={-0.1, 0, 0.125, 0.25, 0.375},
    xlabel={$x$},
    ylabel={$u_2(x)$},
    ticks=both,
    width=10cm,
    height=5cm,
    domain=-1:1,
    samples=400,
    enlargelimits=true,
    axis line style={->},
    ticklabel style={font=\small},
    every axis x label/.style={at={(current axis.right of origin)}, anchor=west},
    every axis y label/.style={at={(current axis.above origin)}, anchor=south},
  ]

  \addplot[domain=-1:-0.5, thick, black] {-(x + 0.5)^2/2 + 1/8};
  \addplot[domain=-0.5:0, thick, black] {(x + 0.5)^2/2 + 1/8};
  \addplot[domain=0:0.5, thick, black] {(x - 0.5)^2/2 + 1/8};
  \addplot[domain=0.5:1, thick, black] {-(x - 0.5)^2/2 + 1/8};

  \end{axis}
\end{tikzpicture}
\caption{Plot of $u_2(x)$.}
\end{figure}
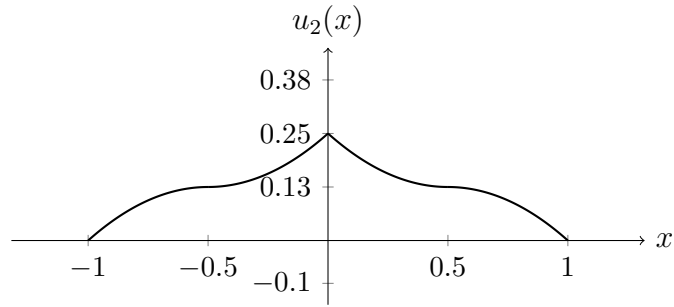
We can verify that $u_1$ and $u_2$ are both viscosity solutions to equation \eqref{eikonal5}.\\
This example demonstrates that when $f(x)$ is merely nonnegative, the uniqueness of the viscosity solution may fail. For our problem, we indeed have 
\[
\lim_{\epsilon \rightarrow 0^+ }u^{\epsilon}=u=u_2.
\]
We note that $u=u_2$ is indeed the maximal solution among all possible radially symmetric solutions.
\end{ex}
\section*{Acknowledgment}
This work constitutes my undergraduate thesis at the University of Wisconsin–Madison. I am deeply grateful to Professor Hung Vinh Tran for his invaluable guidance and support throughout the past year. His insight, encouragement, and patient supervision have been essential to the completion of this project.

I would also like to thank Professors Hiroyoshi Mitake and Xiaoqin Guo for their helpful comments and suggestions during discussions in Beijing, which contributed to the refinement of this work.

\begin{thebibliography}{30} 

\bibitem{AIPS}
N. Anantharaman, R. Iturriaga, P. Padilla, H. Sanchez-Morgado,  
\emph{Physical solutions of the Hamilton-Jacobi equation.} 
Discrete Contin. Dyn. Syst. Ser. B 5(3), 513–528 (2005).

\bibitem{Bessi}
U. Bessi, 
\emph{Aubry-Mather theory and Hamilton-Jacobi equations.} 
Commun. Math. Phys. 235(3), 495–511 (2003).

\bibitem{Cagnetti2011}
F. Cagnetti, D. Gomes, and H. Tran,
\newblock {\em Aubry--Mather measures in the non convex setting.}
\newblock {SIAM J. Math. Anal.}, 43(6):2601--2629, 2011.
\newblock {DOI: 10.1137/100817656}.

\bibitem{ChaintronDaudin2025}
{L.-P. Chaintron and S. Daudin.}  
\textit{Optimal rate of convergence in the vanishing viscosity for quadratic Hamilton--Jacobi equations}.  arXiv:2502.09103 [math.AP], 2025.  
DOI: \texttt{10.48550/arXiv.2502.09103}


\bibitem{CirantGoffi2025}
M. Cirant and A. Goffi.  
\newblock{\em Convergence rates for the vanishing viscosity approximation of Hamilton--Jacobi equations: the convex case}.  
\newblock{arXiv preprint arXiv:2502.15495, 2024.}
DOI: \texttt{10.48550/arXiv.2502.15495}

\bibitem{CrandallLions1983}
M. G. Crandall and P.-L. Lions.  
\textit{Viscosity solutions of Hamilton--Jacobi equations}.  Trans. Amer. Math. Soc., 277 (1983), 1--42.

\bibitem{CrandallEvansLions1984}
M. G. Crandall, L. C. Evans, and P.-L. Lions.  
\textit{Some properties of viscosity solutions of Hamilton--Jacobi equations}.  
Trans. Amer. Math. Soc., 282 (1984), 487--502.


\bibitem{Evans}
L.C. Evans,  
\emph{Towards a quantum analog of weak KAM theory.} 
Commun. Math. Phys. 244(2), 311–334 (2004).

\bibitem{Gomes2003}
D.~A. Gomes,
\emph{A stochastic analogue of Aubry-Mather theory.} 
Nonlinearity 15(3), 581-603 (2002).

\bibitem{Gomes2008}
D.~A. Gomes,
{\em Generalized Mather problem and selection principles for viscosity solutions and Mather measures},
Adv. Calc. Var., 1 (2008), 291--307.


\bibitem{ishii2017vanishing}
H. Ishii, H. Mitake, and H. V. Tran.
\newblock {\em The vanishing discount problem and viscosity Mather measures. Part 1: the problem on a torus.}
\newblock {J. Math. Pures Appl.}, 108:125--149, 2017.

\bibitem{ishii2017boundary}
H. Ishii, H. Mitake, and H. V. Tran.
\newblock {\em The vanishing discount problem and viscosity Mather measures. Part 2: boundary value problems.}
\newblock {J. Math. Pures Appl.}, 108(9):150--184, 2017.

\bibitem{LeMitakeTran2017}
N. Q. Le, H. Mitake, and H. V. Tran.
\textit{Dynamical and Geometric Aspects of Hamilton–Jacobi and Linearized Monge–Ampère Equations}.
Lecture Notes in Mathematics, vol. 2183, Springer, 2017.
DOI: \texttt{10.1007/978-3-319-54208-9}

\bibitem{QianSprekelerTranYu2024}
J. Qian, T. Sprekeler, H. V. Tran, and Y. Yu.  
\textit{Optimal rate of convergence in periodic homogenization of viscous Hamilton--Jacobi equations}.  
Multiscale Modeling \& Simulation, vol. 22, no. 4, pp. 1558--1584, 2024.

\bibitem{Tran2021}
H. V. Tran. 
\textit{Hamilton--Jacobi Equations: Theory and Applications}. 
AMS Graduate Studies in Mathematics, 2021. 
ISBN: 978-1-4704-6511-7.

\bibitem{TuZhang}
S. N. T. Tu, J. Zhang,
\emph{On the regularity of stochastic effective Hamiltonian.}
Proc. Am. Math. Soc. 153 (2025), pp. 1191-1203.

\bibitem{Yu}
Y. Yu,  
\emph{A remark on the semi-classical measure from $-\frac{h^2}{2}\Delta+V$ with a degenerate potential $V$.} 
Proc. Am. Math. Soc. 135(5), 1449–1454 (2007).

\end {thebibliography}

\end{document}